\theoremstyle{plain}
    \newtheorem{thm}{Theorem}[section]
    \newtheorem{lem}[thm]   {Lemma}
    \newtheorem{cor}[thm]   {Corollary}
    \newtheorem*{thm*}{Theorem A}
\theoremstyle{definition}
    \newtheorem{rem}[thm]{Remark}
\def\Homeo{\operatorname{Homeo}}
\def\Mod{\operatorname{Mod}}
\def\Aut{\operatorname{Aut}}
\def\Out{\operatorname{Out}}
\newcommand{\be}{\begin{enumerate}}
\newcommand{\ee}{\end{enumerate}}
\newcommand{\R}{\mathbb{R}}
\newcommand{\Z}{\mathbb{Z}}
\begin{document}

\title[Closed surface braids]{Isotopy and homeomorphism of closed surface braids}

\author{Mark Grant}

\author{Agata Sienicka}

\address{Institute of Mathematics,
Fraser Noble Building,
University of Aberdeen,
Aberdeen AB24 3UE,
UK}

\address{Mathematical Institute, 
University of Bonn, 
Endenicher Allee 60,
D-53115 Bonn,
Germany}

\email{mark.grant@abdn.ac.uk}

\email{s6agsien@uni-bonn.de}

\date{\today}

\keywords{Surface braids, link isotopy, link homeomorphism, Birman exact sequence, Dehn--Nielsen--Baer Theorem}
\subjclass[2010]{20F36 (Primary); 57N05, 57M27 (Secondary).}

\begin{abstract}
The closure of a braid in a closed orientable surface $\Sigma$ is a link in $\Sigma\times S^1$. We classify such closed surface braids up to isotopy and homeomorphism (with a small indeterminacy for isotopy of closed sphere braids), algebraically in terms of the surface braid group. We find that in positive genus, braids close to isotopic links if and only if they are conjugate, and close to homeomorphic links if and only if they are in the same orbit of the outer action of the mapping class group on the surface braid group modulo its center.
\end{abstract}


\maketitle
\section{Introduction}\label{sec:intro}

The relationship between braids and links has a long history, starting with Alexander \cite{Alexander23} who showed that every link in $3$-space is the closure of some braid. Markov's theorem \cite{Markov} asserts that two braids close to isotopic links in $3$-space if and only if they are related by a finite sequence of so-called \emph{Markov moves}. The first type of Markov move amounts to conjugation in some braid group $B_n$, while the second type, stabilisation, involves embedding $B_n$ into $B_{n+1}$ by adding an extra strand which crosses the last. Both of these results are proved in more detail in the book of Birman \cite{Bir}, and Morton \cite{Morton2} gave alternative proofs using threadings of diagrams.

The theorems of Alexander and Markov were generalised to links in arbitrary $3$-manifolds by Skora \cite{Skora}. Subsequently Sundheim gave alternative proofs of Skora's results using Morton's technique of threadings of diagrams \cite{Sundheim}. Both authors use the fact that every closed $3$-manifold $V$ has an open book decomposition with non-empty, connected binding $A$ and with page a connected, oriented compact surface $D$ with boundary. Then every oriented link in $V$ is isotopic to the closure of a braid in $D$ with respect to the braid axis $A$, and two links are isotopic if they differ by a finite sequence of moves which amount to conjugation in some surface braid group $B_n(D)$ and stabilisation with respect to certain maps $B_n(D)\to B_{n+1}(D)$. This second type of move, which  allows isotopies which alter the number of strands by passing a strand across $A$, relies for its existence on the non-vacuity of both $A$ and the boundary of $D$.

In this paper we consider the classification of links in $3$-manifolds with open book decompositions with \emph{empty} binding. The prototype is the following theorem of Artin \cite{Artin}, further proofs of which appear in the paper \cite{Morton} by Morton and in textbooks by Burde and Zieschang \cite{BZ} and Kassel and Turaev \cite{KT}. The solid torus $D^2\times S^1$ is an open book with empty binding and with $D^2$ as its page. Note that not every oriented link in the solid torus is isotopic to the closure of a disk braid $\beta\in B_n=B_n(D^2)$, since a closed $n$-braid must intersect each fibre $D^2\times\{\theta\}$ transversely in $n$ points. However, the isotopy classification of closed braids is simpler.

\begin{thm*}[{\cite{Artin,BZ,KT,Morton}}]\label{thm:A}
Two braids $\beta, \beta'\in B_n=B_n(D^2)$ close to isotopic links in $D^2\times S^1$ if and only if they are conjugate in $B_n$.
\end{thm*}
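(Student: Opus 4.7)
The strategy is to identify closed $n$-braids in $D^2\times S^1$ with free loops in the unordered configuration space $C_n(D^2)$, whose fundamental group is $B_n$. Concretely, the projection $\pi\co D^2\times S^1\to S^1$ restricts on any closed $n$-braid $L$ to an $n$-fold covering, so the assignment $\phi_L(\theta)=L\cap\pi^{-1}(\theta)$ defines a loop $\phi_L\co S^1\to C_n(D^2)$. This construction has an obvious inverse, suspending a loop to a link, giving a bijection between closed $n$-braids in $D^2\times S^1$ and free loops in $C_n(D^2)$. Under this bijection, isotopy \emph{through} closed braids corresponds to free homotopy of loops, and free homotopy classes of loops in $C_n(D^2)$ correspond to conjugacy classes in $\pi_1(C_n(D^2))=B_n$.

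For the ``if'' direction, if $\beta'=\gamma\beta\gamma^{-1}$ in $B_n$, then the based loops in $C_n(D^2)$ representing $\beta$ and $\beta'$ are freely homotopic, as conjugate elements of $\pi_1$ always are; suspending this free homotopy produces a smooth family of closed braids in $D^2\times S^1$, which via the isotopy extension theorem gives an ambient isotopy from $\hat\beta$ to $\hat{\beta'}$.

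For the ``only if'' direction, let $H_t$ be an ambient isotopy with $H_0=\mathrm{id}$ and $H_1(\hat\beta)=\hat{\beta'}$, and set $L_t=H_t(\hat\beta)$. The plan is to modify $H_t$ relative to its endpoints so that every $L_t$ is a closed $n$-braid; once achieved, $\{L_t\}$ becomes a free homotopy of loops in $C_n(D^2)$ and the setup above delivers the desired conjugacy. By transversality, the failures of transversality of $L_t$ to the disk fibres occur along a generic path only at isolated Morse tangencies of $\pi|_{L_t}$, and these appear in birth--death pairs: at a birth time two critical points of $\pi|_{L_t}$ (a local maximum and a local minimum) are created, and at the matching death time they annihilate, so that in between the link carries a small ``kink'' meeting nearby fibres in two extra points. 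The main obstacle is to show rigorously that each such birth--death pair can be cancelled by a local modification of the ambient isotopy; geometrically this should hold because the kink is unknotted within a small bicollar and contracts inside the disk fibres it occupies, so the ambient isotopy can be rerouted locally to stay within the space of closed braids throughout. (Morton's diagrammatic approach via threadings of link diagrams in the annulus offers an alternative, combinatorial realization of this cancellation.)
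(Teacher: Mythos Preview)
Your ``if'' direction is fine. The ``only if'' direction, however, has a genuine gap at precisely the point you flag as ``the main obstacle'': the assertion that each birth--death pair of tangencies can be cancelled by a local modification of the isotopy is essentially the whole content of the theorem, and your heuristic does not prove it. The difficulty is global rather than local. Immediately after a birth the kink is indeed small and cancellable, but by the time of the matching death the local maximum and minimum may have migrated far apart and the arc of $L_t$ joining them may have wound around the other strands; moreover, in a generic one-parameter family the maximum created at one birth need not annihilate with the minimum created at that same birth, so the Cerf graphic can be complicated. Making this rigorous is possible---it is essentially what the paper alludes to as ``the use of Cerf theory as in \cite{Sundheim}''---but it requires real work that you have not supplied, and invoking Morton's threadings as an ``alternative realization'' is not a substitute for carrying it out.

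By contrast, the paper cites Theorem~A to Kassel--Turaev and then generalizes their argument, which is purely algebraic and sidesteps the cancellation problem entirely. One lifts the ambient isotopy to the infinite cyclic cover $D^2\times\R$, where both $\overline{L}$ and $\overline{L'}$ can be unravelled to $Q\times\R$ by level-preserving homeomorphisms $H_\beta$, $H_{\beta'}$. The lifted isotopy commutes with the deck transformation $T$, and chasing the resulting diagrams shows that the induced homotopy self-equivalence of the punctured disk conjugates $(\overline{\beta}_1)_*$ to $(\overline{\beta'}_1)_*$ in $\Aut(\pi_1(D^2-Q))\cong\Aut(F_n)$. Faithfulness of the Artin representation $B_n\hookrightarrow\Aut(F_n)$ then yields conjugacy in $B_n$ directly. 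This buys a clean proof with no transversality arguments; the price is that faithfulness of the Artin representation (or, in positive genus, the Birman exact sequence together with Dehn--Nielsen--Baer) must be taken as input.
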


The notion of homeomorphism of links gives an \emph{a priori} weaker relation on links than that of isotopy. Two oriented links $L,L'\subset V$ are \emph{homeomorphic} if there exists a homeomorphism $h:V\to V$ which restricts to the identity on $\partial V$ and such that $h(L)=L'$ (preserving orientations). Theorem A holds also for homeomorphism, and thus there is no distinction between homeomorphism and isotopy for closed braids in the solid torus. This can be viewed as a consequence of the fact that any orientation preserving homeomorphism of the disk which restricts to the identity on the boundary is isotopic to the identity.

For the rest of the paper $\Sigma$ will denote a closed orientable surface of genus $g$. An element $\beta$ of the surface braid group $B_n(\Sigma)$ may be represented as a geometric braid in $\Sigma\times I$, and by identifying the two ends of this cylinder we obtain its closure $\hat\beta$, an oriented link in the closed $3$-manifold $\Sigma\times S^1$. In this paper we investigate the classification of such closed surface braids in $\Sigma\times S^1$ up to both isotopy and homeomorphism, stating our results in terms of the algebra of surface braid groups.

\begin{thm}\label{thm:genus2isotopy}
If $g\ge2$, then two surface braids $\beta,\beta'\in B_n(\Sigma)$ close to isotopic links in $\Sigma\times S^1$ if and only if they are conjugate in $B_n(\Sigma)$.
\end{thm}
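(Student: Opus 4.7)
The easier direction is standard: if $\beta' = \alpha\beta\alpha^{-1}$ in $B_n(\Sigma)$, the conjugation is realized by an ambient isotopy that cyclically slides the $\alpha$-factor around the $S^1$-direction of $\Sigma \times S^1$, giving the analog of the Markov I move.

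For the converse, my plan is to identify a closed braid $\hat\beta$ with the free loop $\gamma_\beta\colon S^1 \to C_n(\Sigma)$ defined by $s \mapsto \hat\beta \cap (\Sigma \times \{s\})$. Since $B_n(\Sigma) = \pi_1(C_n(\Sigma))$, conjugacy classes in $B_n(\Sigma)$ correspond bijectively to free homotopy classes of such loops. So given an ambient isotopy $\{H_t\}$ with $H_0 = \mathrm{id}$ and $H_1(\hat\beta) = \hat{\beta'}$, the task becomes to produce a free homotopy $\gamma_\beta \simeq \gamma_{\beta'}$ in $C_n(\Sigma)$.

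The main technical step is to arrange $h := H_1$ to be fiber-preserving with respect to $p\colon \Sigma \times S^1 \to S^1$, that is, $h(\Sigma \times \{s\}) = \Sigma \times \{s\}$ for all $s$, while maintaining $h(\hat\beta) = \hat{\beta'}$. That every element of $\Homeo_0(\Sigma \times S^1)$ is isotopic to a fiber-preserving one for $g \ge 2$ follows from the uniqueness up to isotopy of the horizontal incompressible surface structure on $\Sigma \times S^1$, established by Waldhausen (together with Hatcher's work on diffeomorphism groups of Haken manifolds). Once $h$ is fiber-preserving, restricting to slices yields a loop $\psi\colon S^1 \to \Homeo_0(\Sigma)$, $\psi_s := h|_{\Sigma \times \{s\}}$, satisfying $\psi_s(\gamma_\beta(s)) = \gamma_{\beta'}(s)$. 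By the theorem of Hamstrom (or Earle--Eells in the smooth category), $\Homeo_0(\Sigma)$ is contractible for $g \ge 2$, so $\psi$ admits a free null-homotopy $\Psi\colon S^1 \times I \to \Homeo_0(\Sigma)$ with $\Psi_0 = \psi$ and $\Psi_1 \equiv \mathrm{id}$. Then $\tilde\gamma(s, u) := \Psi(s, u)^{-1}(\gamma_{\beta'}(s))$ is the sought free homotopy in $C_n(\Sigma)$ from $\gamma_\beta$ to $\gamma_{\beta'}$.

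The main obstacle is the fiber-preserving reduction in its relative form: one must not merely isotope $H_1$ to some fiber-preserving map, but produce a fiber-preserving $h \in \Homeo_0(\Sigma \times S^1)$ still satisfying $h(\hat\beta) = \hat{\beta'}$. The hypothesis $g \ge 2$ is essential both for Hamstrom's contractibility theorem and for the rigidity of the product structure on $\Sigma \times S^1$; for $\Sigma = S^2$ the extra homotopy coming from $\pi_2(S^2)$ produces homeomorphisms of $S^2 \times S^1$ that permute closed braids in ways not captured by conjugation, yielding the indeterminacy alluded to in the abstract.
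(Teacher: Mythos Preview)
Your strategy is genuinely different from the paper's: you aim to show directly that isotopic closed braids are \emph{transversely} isotopic, by forcing the ambient homeomorphism to be fiber-preserving and then invoking Hamstrom's contractibility of $\Homeo_0(\Sigma)$. The paper instead lifts the isotopy to the infinite cyclic cover $\overline V=\Sigma\times\R$, retracts the complement $\overline V-\overline L$ onto $\Sigma-Q$, and uses the Birman exact sequence together with Dehn--Nielsen--Baer for punctured surfaces to recognise the induced self-map of $\Sigma-Q$ as coming from a braid. Both arguments ultimately rest on the same fact (that $\Homeo_0(\Sigma)$ is contractible for $g\ge2$, which is exactly what makes $\partial$ injective in the Birman sequence), but they deploy it quite differently.

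However, the obstacle you flag in your final paragraph is a genuine gap, and you have not closed it. Waldhausen and Hatcher tell you that $H_1$ is isotopic to \emph{some} fiber-preserving $h$, but the isotopy from $H_1$ to $h$ will in general move $\hat{\beta'}$; what you obtain is that $h(\hat\beta)=\hat{\beta''}$ is some new closed braid isotopic to $\hat{\beta'}$, which is precisely the problem you started with. Nothing in the cited 3-manifold results controls this isotopy relative to the link. One might hope to upgrade to the statement that the inclusion of fiber-preserving homeomorphisms into $\Homeo_0(\Sigma\times S^1)$ is a weak equivalence and then deform the whole path $H_t$ into the fiber-preserving subgroup, but you cannot fix both endpoints, and letting $H_1$ move again forfeits the condition $h(\hat\beta)=\hat{\beta'}$. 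The paper's approach sidesteps this entirely: the lifted map $\overline h_1$ induces, via retraction along $\R$, a homotopy self-equivalence $\Phi_h$ of $\Sigma-Q$ whose extension to $\Sigma$ is \emph{automatically} homotopic to the identity (because $\overline h_0=\mathrm{id}$), and Dehn--Nielsen--Baer then forces $[(\Phi_h)_*]$ to lie in the image of $B_n(\Sigma)\hookrightarrow\Out^*(\pi_1(\Sigma-Q))$. No fiber-preserving normalisation is needed.
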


One of the main tools in proving Theorem \ref{thm:genus2isotopy} is the so called \emph{Birman exact sequence}
\[
\xymatrix{
1 \ar[r] & B_n(\Sigma) \ar[r] & \Mod(\Sigma,Q) \ar[r] & \Mod(\Sigma) \ar[r] & 1,
}
\]
the details of which are recalled in Section 2 below. This sequence furnishes an outer action $\Mod(\Sigma)\to \Out(B_n(\Sigma))$ of the mapping class group of $\Sigma$ on its braid group. Therefore the mapping class group acts on the set of conjugacy classes in $B_n(\Sigma)$.

\begin{thm}\label{thm:genus2homeomorphism}
If $g\ge2$ then two surface braids $\beta,\beta'\in B_n(\Sigma)$ close to homeomorphic links in $\Sigma\times S^1$ if and only if their conjugacy classes $[\beta]$ and $[\beta']$ are in the same orbit under the outer action of $\Mod(\Sigma)$ on $B_n(\Sigma)$.
\end{thm}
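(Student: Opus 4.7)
Both directions rely on the Birman exact sequence combined with Theorem~\ref{thm:genus2isotopy}; the necessity direction additionally requires structural information about $\Mod(\Sigma\times S^1)$ available when $g\ge 2$.

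\textbf{Sufficiency.} Given $\phi\in\Mod(\Sigma)$ with $\phi\cdot[\beta]=[\beta']$, I would lift $\phi$ through the Birman exact sequence to some $\tilde\phi\in\Mod(\Sigma,Q)$ and choose a representative homeomorphism $\Phi\co\Sigma\to\Sigma$ with $\Phi(Q)=Q$. The product self-homeomorphism $\Phi\times\mathrm{id}_{S^1}$ of $\Sigma\times S^1$ sends $\hat\beta$ to the closure of $\tilde\phi\beta\tilde\phi^{-1}\in B_n(\Sigma)$, which is conjugate to $\beta'$ by construction. Composing with the isotopy supplied by Theorem~\ref{thm:genus2isotopy} produces the required homeomorphism carrying $\hat\beta$ to $\hat{\beta'}$.

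\textbf{Necessity.} Given an orientation-preserving homeomorphism $h\co\Sigma\times S^1\to\Sigma\times S^1$ with $h(\hat\beta)=\hat{\beta'}$, the plan is to reduce $h$, modulo ambient isotopy through closed braids, to a product $\Phi\times\mathrm{id}_{S^1}$ with $\Phi\in\Homeo^+(\Sigma)$ and $\Phi(Q)=Q$. For $g\ge 2$, the $S^1$-bundle $\Sigma\times S^1\to\Sigma$ is the unique Seifert fibration up to isotopy (a standard consequence of its $\mathbb{H}^2\times\R$ geometry), so after an ambient isotopy we may first assume $h$ is bundle-preserving, of the form $h(x,\theta)=(\phi(x),\epsilon\theta+f(x))$ for some $\phi\in\Homeo(\Sigma)$, $\epsilon\in\{\pm 1\}$ and $f\co\Sigma\to S^1$. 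The orientation hypotheses---that $h$ preserves the orientation of $\Sigma\times S^1$ and the $S^1$-direction along the braid strands---force $\epsilon=+1$ and $\phi\in\Homeo^+(\Sigma)$. The requirement that $h$ send a closed $n$-braid to a closed $n$-braid then constrains the gauge term $f$, and a further isotopy of $\Sigma\times S^1$ compatible with the link allows us to eliminate $f$ and then bring $Q$ into $\Phi$-invariant position.

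With $h=\Phi\times\mathrm{id}_{S^1}$ in this reduced form, $h(\hat\beta)$ is the closure of $\Phi_*\beta\Phi_*^{-1}\in B_n(\Sigma)$, where $\Phi_*\in\Mod(\Sigma,Q)$ is the isotopy class of $\Phi$ relative to $Q$. Since $h(\hat\beta)=\hat{\beta'}$, Theorem~\ref{thm:genus2isotopy} forces $\Phi_*\beta\Phi_*^{-1}$ to be conjugate to $\beta'$ in $B_n(\Sigma)$. The image $\phi\in\Mod(\Sigma)$ of $\Phi_*$ under the Birman exact sequence therefore satisfies $\phi\cdot[\beta]=[\beta']$, placing $[\beta']$ in the $\Mod(\Sigma)$-orbit of $[\beta]$, as required.

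The principal obstacle will be the reduction in the necessity direction---namely, showing that any self-homeomorphism of $\Sigma\times S^1$ carrying one closed braid to another can, up to ambient isotopy respecting the closed braid structure, be taken to be a product $\Phi\times\mathrm{id}_{S^1}$. This combines uniqueness of the Seifert fibration in $\mathbb{H}^2\times\R$ geometry with a careful analysis of how the non-product components of $\Mod(\Sigma\times S^1)$---the $S^1$-reversal and the gauge transformations parametrised by $H^1(\Sigma;\Z)$---interact with the transverse link $\hat\beta$, together with rigidity of freely homotopic links in the aspherical manifold $\Sigma\times S^1$.
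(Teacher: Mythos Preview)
Your sufficiency argument is fine, and essentially matches the paper's: rather than invoking Theorem~\ref{thm:genus2isotopy} as a black box, the paper writes down the explicit level-preserving homeomorphism $H([x,t])=[\overline{\beta'}_t\circ h\circ\overline{\beta}_t^{-1}(x),t]$ directly, but this amounts to the same thing.

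Your necessity argument, however, takes a genuinely different route from the paper and contains a real gap. The paper never analyses $\Mod(\Sigma\times S^1)$ or invokes Seifert rigidity. Instead it lifts $h$ to the infinite cyclic cover $\overline{V}=\Sigma\times\R$, forms the homotopy equivalence $\Phi_h=r'\circ\overline{h}\circ i:\Sigma-Q\to\Sigma-Q$, and reads off from diagram~(\ref{eq:conj}) that $[(\Phi_h)_*]$ conjugates $\iota(\beta)$ to $\iota(\beta')$ in $\Out^*(\pi_1(\Sigma-Q))$. The Dehn--Nielsen--Baer theorem and Lemma~\ref{lem:DNBnat} then place $[(\Phi_h)_*]$ in the image of $\Mod(\Sigma,Q)$, giving the conclusion directly.

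The gap in your approach is the claimed elimination of the gauge term. For $g\ge2$ the group $\Mod(\Sigma\times S^1)$ is (as you note) an extension of $\Mod^\pm(\Sigma)\times\Z/2$ by $H^1(\Sigma;\Z)$, and a gauge transformation $g_f(x,\theta)=(x,\theta+f(x))$ with $[f]\neq0$ is \emph{not} isotopic to the identity: it acts on $\pi_1(\Sigma)\times\Z$ by $(g,n)\mapsto(g,n+\langle[f],g\rangle)$, which is a non-inner automorphism. So your assertion that ``a further isotopy \ldots\ allows us to eliminate $f$'' is false in general. Moreover, once you isotope $h$ to a fibre-preserving representative $h_1$, you no longer have $h_1(\hat\beta)=\hat{\beta'}$ but only $h_1(\hat\beta)$ isotopic to $\hat{\beta'}$, and $h_1(\hat\beta)=g_f\big((\Phi_*\beta)^{\widehat{\ }}\,\big)$ is typically not even transverse to the fibres. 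To finish along your lines you would need the lemma that whenever $g_f(\hat\gamma)$ is isotopic to a closed braid $\hat\delta$, the braids $\gamma$ and $\delta$ are already conjugate in $B_n(\Sigma)$. For $n=1$ this drops out of a $\pi_1$ calculation (one finds $\langle[f],\Phi_*\beta\rangle=0$ is forced), but for $n\ge2$ it is not obvious, and your invocation of ``rigidity of freely homotopic links in aspherical manifolds'' is not a known theorem---freely homotopic links in aspherical $3$-manifolds need not be isotopic. This is precisely the difficulty that the paper's covering-space argument is designed to circumvent.
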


Thus the difference between homeomorphism and isotopy in the higher genus case is precisely measured by the potential non-triviality of the outer action of $\Mod(\Sigma)$ on $B_n(\Sigma)$. We note that $\Out(B_n(\Sigma))$ is isomorphic to the extended mapping class group $\Mod^\pm(\Sigma)$, see \cite{Bellingeri,Zhang,An}. The action of $\Mod^\pm(\Sigma)$ on $B_n(\Sigma)$ has been intensively studied, see for instance \cite{KidaYamagata}.

In the lower genus cases where either $g=0$ and $n\ge3$ or $g=1$ and $n\ge2$, then as described in \cite[Theorem 4.3]{Bir} there is an exact sequence
\[
 \xymatrix{
1 \ar[r] & \mathcal{Z}B_n(\Sigma) \ar[r] &  B_n(\Sigma) \ar[r] & \Mod(\Sigma,Q) \ar[r] & \Mod(\Sigma) \ar[r] & 1,
}
\]
where $\mathcal{Z}B_n(\Sigma)$ denotes the centre of the surface braid group. We obtain the following result for sphere braids.

\begin{thm}\label{thm:genus0}
Let $n\ge3$.

\begin{enumerate}[(a)]
\item Two sphere braids $\beta,\beta'\in B_n(S^2)$ close to homeomorphic links in $S^2\times S^1$ if and only if they are conjugate in $B_n(S^2)$ up to multiplication by the non-trivial element $\Delta^2\in \mathcal{Z}B_n(S^2)$.
\item Consequently, if two sphere braids $\beta,\beta'\in B_n(S^2)$ close to isotopic links in $S^2\times S^1$, then they are conjugate in $B_n(S^2)$ up to multiplication by the non-trivial element $\Delta^2\in \mathcal{Z}B_n(S^2)$.
\end{enumerate}
\end{thm}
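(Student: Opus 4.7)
Since isotopic links are homeomorphic, part (b) follows from (a). For (a), I would adapt the strategy used for Theorem \ref{thm:genus2homeomorphism} to the five-term exact sequence displayed in the introduction, which for $g=0$ collapses (using $\Mod(S^2)=1$) to
\[
1 \to \langle \Delta^2 \rangle \to B_n(S^2) \to \Mod(S^2,Q) \to 1.
\]
The $\Delta^2$-indeterminacy in the statement reflects the nontrivial kernel of this Birman map; by construction $\langle \Delta^2 \rangle$ is the image of $\pi_1(\Homeo^+(S^2)) = \pi_1(SO(3)) \cong \Z/2$ under the evaluation-at-$Q$ map.

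For the \emph{if} direction: conjugation in $B_n(S^2)$ produces an ambient-isotopic (hence homeomorphic) closure by the standard rotation of the $S^1$-factor, so it suffices to show that $\hat\beta$ and $\widehat{\beta \Delta^2}$ are homeomorphic. I would exhibit an orientation-preserving fibre-preserving homeomorphism
\[
H(x,t) = \bigl(R_t(x), t\bigr)
\]
of $S^2\times S^1$, in which $R:S^1 \to \Homeo^+(S^2)$ is a smooth loop based at the identity representing the generator of $\pi_1(\Homeo^+(S^2))$. By the definition of the Birman map, the class $[R]$ evaluates at $Q$ to the braid $\Delta^2$, so a direct computation gives $H(\hat\beta) = \widehat{\Delta^2 \beta}$.

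For the \emph{only if} direction, take an orientation-preserving homeomorphism $h:S^2\times S^1 \to S^2\times S^1$ with $h(\hat\beta) = \hat\beta'$. As in the higher-genus case, I would first isotope $h$ so that it is fibre-preserving with respect to $p:S^2\times S^1 \to S^1$, and then (after reparameterising the base) assume $h$ covers the identity on $S^1$. This writes $h(x,t) = (h_t(x),t)$ for a loop $h_t \in \Homeo^+(S^2)$ carrying the configuration path of $\beta$ to that of $\beta'$. The homotopy class of the loop $h_t$ in $\pi_1(\Homeo^+(S^2)) = \Z/2$ contributes a factor of $\Delta^{2\epsilon}$ with $\epsilon \in \{0,1\}$, while a basepoint-change argument in the fundamental groupoid of $\mathrm{Conf}_n(S^2)$ produces a conjugating braid $\alpha \in B_n(S^2)$, yielding $\beta' = \alpha \beta \alpha^{-1} \Delta^{2\epsilon}$.

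The main obstacle is the fibring step. The standard higher-genus argument uses that $\Sigma\times S^1$ for $g\ge 2$ is aspherical and Haken, neither of which holds for $S^2\times S^1$. Instead, I would exploit that $\hat\beta$ and $\hat\beta'$ are both positively transverse to every $S^2$-fibre: cutting along regular neighbourhoods of the links and applying an isotopy-extension argument should allow $h$ to be made fibre-preserving within its isotopy class. The remaining $\Z/2$-ambiguity is then precisely that of $\pi_1(SO(3))$, matching the kernel in the exact sequence above.
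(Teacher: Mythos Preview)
Your reduction of (b) to (a) and your ``if'' direction for (a) match the paper: conjugate braids have isotopic closures, and the homeomorphism $H(x,t)=(R_t(x),t)$ you describe is precisely the Gluck twist, which the paper invokes to show that $\widehat{\Delta^2\beta}$ and $\hat\beta$ are homeomorphic.

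Your ``only if'' direction, however, diverges from the paper and rests on the step you yourself flag as unresolved. You propose to isotope $h$ to be fibre-preserving over $S^1$, and you attribute the higher-genus argument to asphericity or the Haken property of $\Sigma\times S^1$. In fact the paper never makes $h$ fibre-preserving, in any genus. Instead it lifts $h$ to the infinite cyclic cover $\overline V=\Sigma\times\R$ (for $S^2\times S^1$ this is the universal cover), equivariantly with respect to the deck transformation $T$. Using the homotopy equivalences $i\colon\Sigma-Q\hookrightarrow\overline V-\overline L$ and $r'\colon\overline V-\overline{L'}\to\Sigma-Q$ built from the point-pushing isotopies of $\beta$ and $\beta'$, it forms $\Phi_h=r'\circ\overline h\circ i\colon\Sigma-Q\to\Sigma-Q$ and checks via the diagram involving $T$ that $[(\Phi_h)_*]$ conjugates $[(\overline\beta_1)_*]$ to $[(\overline{\beta'}_1)_*]$ in $\Out^*(\pi_1(\Sigma-Q))$. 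Dehn--Nielsen--Baer identifies this with conjugation of $\partial(\beta)$ to $\partial(\beta')$ in $\Mod(S^2,Q)$, and since $\Mod(S^2)=1$ the short exact sequence $1\to\langle\Delta^2\rangle\to B_n(S^2)\xrightarrow{\partial}\Mod(S^2,Q)\to 1$ yields the conclusion.

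So the paper sidesteps the fibring problem altogether by working homotopically on the cover; no $3$-manifold topology beyond covering-space lifting is used. Your route could in principle be completed---one can arrange fibre-preservation using the known computation of $\pi_0\Homeo^+(S^2\times S^1)$ together with the constraint that $h$ carries an oriented closed $n$-braid to another---but the sketch you give (``cutting along regular neighbourhoods and applying isotopy extension'') does not yet supply this, and it is genuinely more work than the paper's argument.
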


\begin{rem}
The non-trivial element $\Delta^2$ of $\mathcal{Z}B_n(S^2)\cong \Z/2$ is represented by the \emph{full twist}, a rotation of $n$ points spaced along a single line of longitude through $2\pi$ radians about an axis through the poles (see \cite[Lemma 4.2.3]{Bir}, for example). Its closure $\widehat{\Delta^2}$ is homeomorphic to the closure of the trivial braid, the required homeomorphism of $S^2\times S^1$ being given by a Gluck twist \cite{Gluck}. Since the Gluck twist is not isotopic to the identity, one conjectures that $\widehat{\Delta^2}$ is not isotopic to the closure of the trivial braid, which would imply that two sphere braids have isotopic closures if and only if they are conjugate. This appears to require new methods for distinguishing non-isotopic links in $3$-manifolds. We understand that a proof will appear in a forthcoming paper by Paolo Aceto, Corey Bregman, Christopher W. Davis, JungHwan Park, and Arunima Ray with the working title ``Automorphisms of 3-manifolds acting on knots".
\end{rem}

In the torus case, we state our result in terms of the extension
\[
 \xymatrix{
1 \ar[r] & B_n(T^2)/\mathcal{Z}B_n(T^2) \ar[r] & \Mod(T^2,Q) \ar[r] & \Mod(T^2) \ar[r] & 1,
}
\]
and the resulting outer action of $\Mod(T^2)\cong \operatorname{SL}(2,\Z)$ on the group $B_n(T^2)/\mathcal{Z}B_n(T^2)$.

\begin{thm}\label{thm:genus1}
Let $n\ge2$.

\begin{enumerate}[(a)]
\item Two torus braids $\beta,\beta'\in B_n(T^2)$ close to homeomorphic links in $T^2\times S^1$ if and only if their representatives in $B_n(T^2)/\mathcal{Z}B_n(T^2)$ are in the same orbit under the outer action of $\Mod(T^2)$.
\item Two torus braids $\beta,\beta'\in B_n(T^2)$ close to isotopic links in $T^2\times S^1$ if and only if they are conjugate in $B_n(T^2)$.
\end{enumerate}
\end{thm}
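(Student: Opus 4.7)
The proof follows the general approach of Theorems \ref{thm:genus2homeomorphism} and \ref{thm:genus2isotopy}, but must account for the non-trivial kernel $\mathcal{Z}B_n(T^2)$ in the generalised Birman sequence. The plan is to prove part (a) first, and then extract part (b) by noting that an ambient isotopy induces the identity in $\Mod(T^2)$ and then eliminating the residual central element via a homological argument.

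For part (a), backward direction: given $\phi\in\Mod(T^2)$ with $\phi_*[\beta]=[\beta']$ in $B_n(T^2)/\mathcal{Z}B_n(T^2)$, realise $\phi$ as a diffeomorphism of $T^2$ and extend by the identity on $S^1$; this takes $\hat\beta$ to the closure of some lift of $\phi_*(\beta)$. Compose with a fibred isotopy realising a conjugation, and with a shearing self-homeomorphism of $T^3$ of the form $(x,y,\theta)\mapsto(x+p\theta,y+q\theta,\theta)$ realising the residual central element $z=a^p b^q$ (where $a,b$ are the standard sliding generators of $\mathcal{Z}B_n(T^2)$), to obtain the required homeomorphism. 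For the forward direction, given $h\co T^3\to T^3$ with $h(\hat\beta)=\hat\beta'$, the strategy is to modify $h$ by a self-homeomorphism of $T^3$ so that the composition preserves the fibration $T^3\to S^1$ up to isotopy; the composition then descends to a mapping class $\phi\in\Mod(T^2)$ together with a monodromy element of $B_n(T^2)$, yielding the desired relation between $[\beta]$ and $[\beta']$ as in the higher genus case. The shears by which one modifies $h$ contribute precisely the $\mathcal{Z}B_n(T^2)$ ambiguity.

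For part (b), the backward direction is the standard isotopy coming from the conjugating braid. For the forward direction, apply part (a) to an ambient isotopy $h_t$: since $h_1$ is connected to the identity, the induced element of $\Mod(T^2)$ is trivial, so $\beta'=z\gamma\beta\gamma^{-1}$ for some central $z\in\mathcal{Z}B_n(T^2)$ and some $\gamma\in B_n(T^2)$. To conclude that $z=1$, use the homology class $[\hat\beta]\in H_1(T^3;\Z)$: it is both conjugation- and isotopy-invariant, and the induced homomorphism $B_n(T^2)\to H_1(T^3;\Z)$ restricts injectively to $\mathcal{Z}B_n(T^2)$, since the sliding generators of the centre close to $n$-fold parallel copies of loops in $T^2$ whose classes in $H_1(T^3)$ are non-trivial for $n\ge2$.

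The main obstacle is the reduction to a fibration-preserving homeomorphism in part (a)'s forward direction, which is subtler than in the higher genus case because $T^3$ admits many non-isotopic $S^1$-fibrations. One must identify the shears (and possibly other adjustments) needed to restore the standard fibration and verify that these correspond exactly to elements of $\mathcal{Z}B_n(T^2)$ up to fibred isotopy. A secondary technicality is the usual bookkeeping of lifts of mapping classes of $T^2$ to $\Aut(B_n(T^2))$, with the inner and central indeterminacies absorbed into $\gamma$ and $z$ respectively.
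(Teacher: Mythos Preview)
Your proposal diverges from the paper most significantly in the forward direction of part (a). The paper does not attempt to modify $h$ into a fibration-preserving homeomorphism of $T^3$. Instead it reuses the covering-space machinery from Section~4: lift $h$ to $\overline{h}$ on the infinite cyclic cover $\overline{V}=T^2\times\R$, form the homotopy equivalence $\Phi_h=r'\circ\overline{h}\circ i\colon T^2-Q\to T^2-Q$, and invoke the Dehn--Nielsen--Baer theorem for punctured surfaces together with the diagram~(\ref{eq:strategy2}) to conclude directly that $\partial(\beta)$ and $\partial(\beta')$ are conjugate in $\Mod(T^2,Q)$. The orbit statement then drops out of the exact sequence. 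Your ``main obstacle''---restoring the standard $S^1$-fibration---is therefore sidestepped rather than confronted. Your shearing construction for the backward direction is correct and more explicit than the paper's, which simply cites the homeomorphism $H$ built in the proof of Theorem~\ref{thm:genus2homeomorphism}.

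For part (b) the two approaches essentially coincide. Both reduce to ``conjugate up to a central element'' and then kill the central element by the homology observation: a non-trivial $z\in\mathcal{Z}B_n(T^2)$ alters $[\hat\beta]\in H_1(T^2\times S^1;\Z)$, which is an isotopy invariant. One small point: you reach ``conjugate up to centre'' by feeding the isotopy through your part (a), which presupposes that (a) is fully established; the paper instead reruns the isotopy-specific lifting argument of Theorem~\ref{thm:genus2isotopy}, where the lift of $h_t$ to $T^2\times\R$ exists automatically because $h_0=\mathrm{id}$. This makes part (b) logically independent of the fibration issue you flag in (a).

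In summary, your route is more geometric and hands-on, but the obstacle you identify is genuine and you have not resolved it; the paper's route trades that difficulty for the algebraic input of Dehn--Nielsen--Baer, which it has already set up for the higher-genus cases.
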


Our proofs are based on the proof of Theorem A given by Kassel and Turaev in \cite[Chapter 2]{KT}. For braids in the disk the main ingredient is the faithful representation $B_n\hookrightarrow\operatorname{Aut}(F_n)$, and the description of its image given by Artin \cite{Artin}. For surface braids the main ingredients are the Birman exact sequence and the Dehn--Nielsen--Baer theorem (for closed surfaces with and without punctures), the statements of which are recalled in Sections 2 and 3 respectively. The proofs of Theorems \ref{thm:genus2isotopy} and \ref{thm:genus2homeomorphism} are given in Section 4, while the proofs of Theorems \ref{thm:genus0} and \ref{thm:genus1} are given in Section 5.

While the isotopy classifications in \cite{Skora} and \cite{Sundheim} appear to be more widely applicable, some care would be needed to deduce our Theorems \ref{thm:genus2isotopy}, \ref{thm:genus0}(b) and \ref{thm:genus1}(b) as specializations, since the proofs assume an open book decomposition with non-empty binding. For example, the proof of \cite[Theorem 4.1]{Skora} uses faithfulness of the representation of the mapping class group of a punctured surface on its fundamental group, which involves choosing a base-point in a boundary component, and is not true as stated for sphere braids. We prefer to regard these results as generalisations of Theorem A. By generalising the proof in \cite{KT}, we obviate the need to work with specific combinatorial deformations, or the use of Cerf theory as in \cite[Lemma 6.1]{Sundheim}. Our methods also lead easily to the results on homeomorphism, which appear to be new.

Finally we mention an immediate corollary of our Theorems \ref{thm:genus2isotopy} and \ref{thm:genus1}. Let us say that two closed braids are \emph{transversely isotopic} if they are isotopic through closed braids.

\begin{cor}
For closed surface braids in $\Sigma\times S^1$ in genus $g\ge1$, isotopy implies transverse isotopy.
\end{cor}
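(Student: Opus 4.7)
The plan is a short reduction to the isotopy classifications already established. Suppose $\hat\beta$ and $\hat{\beta'}$ are isotopic closed $n$-braids in $\Sigma\times S^1$ with $g\ge1$. By \thmref{thm:genus2isotopy} when $g\ge2$, or by \thmref{thm:genus1}(b) when $g=1$, the braids $\beta$ and $\beta'$ are conjugate in $B_n(\Sigma)$, so we may write $\beta'=\gamma\beta\gamma^{-1}$ for some $\gamma\in B_n(\Sigma)$. It therefore suffices to show that conjugate surface braids have transversely isotopic closures.

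To produce such a transverse isotopy, I would realize $\gamma$, $\beta$, $\gamma^{-1}$ as geometric braids in the sub-cylinders $\Sigma\times[0,\tfrac13]$, $\Sigma\times[\tfrac13,\tfrac23]$, $\Sigma\times[\tfrac23,1]$ of $\Sigma\times I$; their stacking is a geometric braid representing $\gamma\beta\gamma^{-1}=\beta'$, whose closure in $\Sigma\times S^1$ is $\hat{\beta'}$ and is transverse to every $\Sigma$-fibre. Rotating the $S^1$-factor is an ambient isotopy of $\Sigma\times S^1$ through fibre-preserving diffeomorphisms, hence a transverse isotopy of closed braids; apply such a rotation until the $\gamma^{-1}$ segment lies immediately before the $\gamma$ segment. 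The closure is then presented as that of the geometric braid $\gamma^{-1}\cdot\gamma\cdot\beta$. Since $\gamma^{-1}\gamma=1$ in $B_n(\Sigma)=\pi_1(\mathrm{Conf}_n(\Sigma))$, there is an isotopy through geometric braids in the relevant sub-cylinder from $\gamma^{-1}\cdot\gamma$ to the trivial geometric braid; carrying this out while keeping the $\beta$-piece fixed in the complementary sub-cylinder, and then closing, gives a transverse isotopy from $\hat{\beta'}$ to $\hat\beta$.

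The only point that requires care is the last one: an equality in $B_n(\Sigma)$ must be realized by an isotopy \emph{through geometric braids}, i.e.\ through links in the sub-cylinder transverse to every fibre. This is built into the definition of $B_n(\Sigma)$ as the fundamental group of the configuration space, so no genuine obstacle arises. In effect, the argument merely records that the assignment sending a conjugacy class in $B_n(\Sigma)$ to the transverse-isotopy class of its closure is well defined; composing this surjection with the injectivity supplied by the cited theorems yields the corollary.
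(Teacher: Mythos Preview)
Your argument is correct and is precisely the reasoning the paper has in mind: the paper simply declares the result an immediate corollary of Theorems~\ref{thm:genus2isotopy} and~\ref{thm:genus1}(b), leaving implicit the standard fact that conjugate braids have transversely isotopic closures, which you have spelled out in detail. There is no meaningful difference in approach.
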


This research was carried out while the second author held a London Mathematical Society Undergraduate Bursary at the University of Aberdeen, under the supervision of the first author. Both authors would like to thank the London Mathematical Society for their financial support. We also thank Paolo Bellingeri, Tyrone Ghaswala, Richard Hepworth, Andrew Lobb, Mark Powell and Oliver Singh for useful conversations, and the anonymous referee for their helpful comments.

\section{The Birman exact sequence}

In this section we recall some well known results in order to fix our notation. Let $\Sigma=\Sigma_g$ be a closed orientable surface of genus $g$, and let $Q=\{z_1,\ldots , z_n\}$ be a set of $n$ disjoint marked points of $\Sigma$. Let $F_n(\Sigma)=\{(x_1,\ldots , x_n)\in \Sigma^n \mid x_i\neq x_j\mbox{ for }i\neq j\}$ be the \emph{$n$ point ordered configuration space} of $\Sigma$. This space carries a natural action of the symmetric group $\mathfrak{S}_n$ whose orbit space $C_n(\Sigma):=F_n(\Sigma)/\mathfrak{S}_n$ is the \emph{$n$ point unordered configuration space} of $\Sigma$. It is well known (or may be taken as a definition) that $\pi_1(C_n(\Sigma);Q)\cong B_n(\Sigma)$, the \emph{$n$ strand surface braid group} of $\Sigma$. Here we have taken as base point the set $Q$ mentioned above (but note that $C_n(\Sigma)$ is path-connected).

Now let $\Homeo^+(\Sigma)$ denote the topological group of orientation preserving self-homeomorphisms of $\Sigma$ in the compact-open topology. There is a map
\begin{equation}\label{eq:fibration}
\Homeo^+(\Sigma)\to C_n(\Sigma),\qquad h\mapsto h(Q)
\end{equation}
 which turns out to be a locally trivial fibration (see \cite[Lemma 1.35]{KT} for example). Its fiber is the space $\Homeo^+(\Sigma,Q)$ of orientation preserving self-homeomorphisms of $\Sigma$ which fix $Q$ set-wise (the points of $Q$ may be permuted).

Both $\Homeo^+(\Sigma)$ and $\Homeo^+(\Sigma,Q)$ are topological groups under composition of homeomorphisms. The group of path components $\pi_0(\Homeo^+(\Sigma))$ is denoted $\Mod(\Sigma)$, and called the \emph{mapping class group} of $\Sigma$. The group of path components $\pi_0(\Homeo^+(\Sigma,Q))$ is denoted $\Mod(\Sigma,Q)$, and is the \emph{mapping class group of $\Sigma$ with marked points $Q$}. By examining the lower portion of the long exact homotopy sequence of fibration (\ref{eq:fibration}), one may establish the following result.

\begin{thm}[{See \cite[Theorem 4.3]{Bir}}]\label{thm:Birman}
If $g\ge2$, there is a short exact sequence of groups
\[
\xymatrix{
1 \ar[r] & B_n(\Sigma) \ar[r]^-\partial & \Mod(\Sigma,Q) \ar[r]^-{j_*} & \Mod(\Sigma) \ar[r] & 1.
}
\]

If $g=0$ and $n\ge3$, or $g=1$ and $n\ge2$, there is an exact sequence of groups
\[
\xymatrix{
1 \ar[r] & \mathcal{Z}B_n(\Sigma) \ar[r] & B_n(\Sigma) \ar[r]^-\partial & \Mod(\Sigma,Q) \ar[r]^-{j_*} & \Mod(\Sigma) \ar[r] & 1,
}
\]
where $\mathcal{Z}B_n(\Sigma)$ denotes the centre of $B_n(\Sigma)$.
\end{thm}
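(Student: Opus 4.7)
The plan is to extract both exact sequences from the tail of the long exact homotopy sequence associated with the fibration (\ref{eq:fibration}). Since $C_n(\Sigma)$ is path-connected, that tail reads
\[
\pi_1(\Homeo^+(\Sigma)) \xrightarrow{\varphi} \pi_1(C_n(\Sigma);Q) \to \pi_0(\Homeo^+(\Sigma,Q)) \to \pi_0(\Homeo^+(\Sigma)) \to 1,
\]
which, by the definitions already recorded in the excerpt, becomes
\[
\pi_1(\Homeo^+(\Sigma)) \xrightarrow{\varphi} B_n(\Sigma) \xrightarrow{\partial} \Mod(\Sigma,Q) \xrightarrow{j_*} \Mod(\Sigma) \to 1.
\]
The content of the theorem therefore reduces to two tasks: computing $\pi_1(\Homeo^+(\Sigma))$, and identifying the image of $\varphi$ inside $B_n(\Sigma)$.

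For $g \ge 2$, I would invoke the classical theorem of Hamstrom (refined by Earle--Eells) asserting that the identity component $\Homeo^+_0(\Sigma)$ is contractible. Since $\pi_1$ of a topological group is computed inside the component of the identity, this gives $\pi_1(\Homeo^+(\Sigma)) = 1$, so $\varphi$ is trivial and $\partial$ is injective, yielding the short exact sequence directly.

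For the lower genus cases I would use the classical homotopy equivalences $\Homeo^+(S^2) \simeq SO(3)$ and $\Homeo^+(T^2) \simeq T^2 \rtimes SL(2,\Z)$, which give $\pi_1(\Homeo^+(S^2)) \cong \Z/2$ and $\pi_1(\Homeo^+(T^2)) \cong \Z^2$. The main obstacle is then to verify that $\im(\varphi) = \mathcal{Z}B_n(\Sigma)$. The containment $\im(\varphi) \subseteq \mathcal{Z}B_n(\Sigma)$ is a general feature of the long exact sequence when the total space is a topological group: any loop $\gamma$ of homeomorphisms acts on a based loop $\alpha$ in $C_n(\Sigma)$ by pointwise composition, exhibiting a homotopy between $\alpha\cdot\varphi(\gamma)$ and $\varphi(\gamma)\cdot\alpha$.

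For the reverse inclusion I would exhibit generators of the centres explicitly as images of $\varphi$. On $S^2$, the nontrivial class in $\pi_1(SO(3))$ is represented by a loop of rotations through $2\pi$ about an axis disjoint from $Q$; tracing the orbit of $Q$ yields the full twist $\Delta^2$, which generates $\mathcal{Z}B_n(S^2) \cong \Z/2$. On $T^2 = \R^2/\Z^2$, the two generators of $\pi_1(T^2) \cong \Z^2$ are represented by loops of translations in the meridional and longitudinal directions; their images under $\varphi$ are the two commuting ``parallel'' braids in which every strand simultaneously traces out a standard generator of $\pi_1(T^2)$, and it is classical that these generate $\mathcal{Z}B_n(T^2)$. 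Combining these identifications with exactness from the first paragraph assembles the five-term exact sequence in the lower genus cases.
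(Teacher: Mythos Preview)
Your proposal is correct and follows precisely the approach the paper indicates: the paper does not give a proof but simply remarks that the result follows ``by examining the lower portion of the long exact homotopy sequence of fibration~(\ref{eq:fibration})'' and cites Birman's book. You have fleshed out exactly this argument---computing $\pi_1(\Homeo^+(\Sigma))$ via Hamstrom/Earle--Eells in higher genus and via the classical homotopy types in genus $0$ and $1$, and identifying $\im(\varphi)$ with $\mathcal{Z}B_n(\Sigma)$ using the Eckmann--Hilton style centrality argument together with the known descriptions of the centres---so there is nothing to contrast.
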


The maps in the above sequences can be described as follows. The homomorphism $j_*:\Mod(\Sigma,Q) \to \Mod(\Sigma)$ is induced by the obvious inclusion $j:\Homeo^+(\Sigma,Q)\hookrightarrow\Homeo^+(\Sigma)$. The homomorphism $\partial:B_n(\Sigma)=\pi_1(C_n(\Sigma);Q)\to \pi_0(\Homeo^+(\Sigma,Q))=\Mod(\Sigma,Q)$ is the connecting morphism of the fibration (\ref{eq:fibration}), and has the following geometric description. A braid $\beta$ in $B_n(\Sigma)$ may be represented by an isotopy $\beta_t:Q\to \Sigma$ of the set of marked points $Q\subset \Sigma$, which by the Isotopy Extension theorem extends to an isotopy $\overline{\beta}_t:\Sigma\to \Sigma$ of the whole surface such that $\overline{\beta}_0=\mathrm{id}_\Sigma$ and $\overline{\beta}_1(Q)=Q$. Then $\partial(\beta)= [\overline{\beta}_1]$. Because of this geometric description, the map $\partial$ is sometimes called the \emph{point-pushing map}.

Finally in this section we observe that we can replace the group $\Mod(\Sigma,Q)$ in the exact sequences of Theorem \ref{thm:Birman} with the group $\Mod(\Sigma-Q)=\pi_0(\Homeo^+(\Sigma-Q))$, the \emph{mapping class group of the punctured surface $\Sigma-Q$}. This is because a self-homeomorphism of the punctured surface $\Sigma-Q$ extends uniquely to a self-homeomorphism of $\Sigma$ which fixes $Q$ set-wise, via Freudenthal compactification.

\section{The Dehn--Nielsen--Baer theorem}

The Dehn--Nielsen--Baer theorem says roughly that mapping classes of surfaces can be understood entirely algebraically in terms of their action on the fundamental group. We require the version for closed surfaces as well as the version for surfaces with punctures. Modern references are the books of Farb and Margalit \cite{Prim}, Ivanov \cite{Ivan} and Zieschang, Vogt and Coldewey \cite{ZVC}.

For a closed orientable surface $\Sigma$, we denote by $\Homeo(\Sigma)$ the topological group of \emph{all} self-homeomorphisms of $\Sigma$ (not necessarily orientation preserving), and by $\Mod^{\pm}(\Sigma)=\pi_0(\Homeo(\Sigma))$ the \emph{extended mapping class group} of $\Sigma$. Any homeomorphism $h:\Sigma\to \Sigma$ induces an isomorphism $h_*:\pi_1(\Sigma;x)\to \pi_1(\Sigma;h(x))$ for a chosen base point $x$. Composing with a change of base point isomorphism induced by a path from $h(x)$ to $x$, we obtain an automorphism of $\pi_1(\Sigma;x)$. Choosing a different path may alter this automorphism, but only by an inner automorphism. Therefore $h_*$ represents a well-defined element of $\Out(\pi_1(\Sigma;x))$, the group of outer automorphisms of the fundamental group. (From now on we suppress the arbitrarily chosen base point from the notation, unless it becomes relevant.) In this way we obtain a homomorphism $\phi: \Mod^{\pm}(\Sigma)\to \Out(\pi_1(\Sigma))$.

\begin{thm}[Dehn--Nielsen--Baer for closed surfaces]\label{thm:DNBclosed}
When $g\ge1$, the homomorphism $\phi:\Mod^{\pm}(\Sigma)\to \Out(\pi_1(\Sigma))$ is an isomorphism.

When $g=0$, one has $\Mod^\pm(S^2)\cong\Z/2$ and $\Out(\pi_1(S^2))$ is trivial.
\end{thm}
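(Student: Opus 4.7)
The plan is to dispatch $g=0$ directly, then for $g\ge1$ prove injectivity and surjectivity of $\phi$ separately, with the hyperbolic case of surjectivity being the main obstacle.

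For $g=0$ the result is essentially formal: $\pi_1(S^2)$ is trivial, so $\Out(\pi_1(S^2))$ is trivial. On the other hand every orientation-preserving self-homeomorphism of $S^2$ is isotopic to the identity (one can invoke that $\Homeo^+(S^2)$ deformation retracts onto $SO(3)$, or argue by an Alexander-type trick), so $\Mod(S^2)$ is trivial and $\Mod^\pm(S^2)$ is generated by (the class of) any orientation-reversing reflection, yielding $\Z/2$.

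For injectivity when $g\ge1$: suppose $h\in\Homeo(\Sigma)$ with $\phi([h])=1$. Composing with a homeomorphism isotopic to the identity (induced by a path from $h(x)$ back to $x$) we may assume $h(x)=x$ and $h_*=\mathrm{id}$ on $\pi_1(\Sigma,x)$. First I would lift $h$ to a homeomorphism $\tilde h$ of the universal cover $\tilde\Sigma$ commuting with every deck transformation. When $g=1$, $\tilde\Sigma=\R^2$ and the straight-line homotopy $(1-t)\tilde h+t\cdot\mathrm{id}$ is equivariant, so it descends to an isotopy $h\simeq\mathrm{id}$. When $g\ge2$, fix a hyperbolic metric so $\tilde\Sigma=\mathbb H^2$; since $\tilde h$ commutes with a cocompact group of isometries it is at bounded distance from the identity, so it extends continuously to $\partial\mathbb H^2=S^1_\infty$ as the identity (the extension agrees with the identity on the dense set of fixed points of hyperbolic deck transformations). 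A standard equivariant isotopy argument (e.g.\ coning, or using the geodesic straightening) then produces an equivariant isotopy of $\tilde h$ to the identity, which descends to $\Sigma$.

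For surjectivity when $g=1$: $\pi_1(T^2)\cong\Z^2$ is abelian, so $\Out(\pi_1(T^2))=\Aut(\Z^2)=GL_2(\Z)$, and every $M\in GL_2(\Z)$ descends from a linear homeomorphism of $\R^2$ to a homeomorphism of $T^2=\R^2/\Z^2$ realizing it. For $g\ge2$ the surjectivity is the main obstacle, and I would follow the standard boundary-at-infinity approach. Given $\Phi\in\Out(\pi_1(\Sigma))$ represented by $\varphi\in\Aut(\pi_1(\Sigma))$, the key observation is that $\varphi$ induces a $(\pi_1(\Sigma),\varphi)$-equivariant quasi-isometry of the Cayley graph, hence of $\mathbb H^2$, and every quasi-isometry of $\mathbb H^2$ extends canonically to a homeomorphism of $S^1_\infty$. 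The resulting boundary homeomorphism conjugates the boundary action of $\pi_1(\Sigma)$ through $\varphi$; coning it off to a $\varphi$-equivariant self-homeomorphism of $\overline{\mathbb H^2}=\mathbb H^2\cup S^1_\infty$ and quotienting by $\pi_1(\Sigma)$ produces a homeomorphism of $\Sigma$ with $\phi([h])=\Phi$. The hardest step is verifying that the boundary extension exists, is a homeomorphism, and is genuinely $\varphi$-equivariant; for the details I would refer to \cite{Prim,Ivan,ZVC}.
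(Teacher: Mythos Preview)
The paper does not prove this theorem at all: it is stated as a classical result, with the preceding sentence pointing to the books \cite{Prim,Ivan,ZVC} for proofs. So there is no ``paper's own proof'' to compare against; your sketch already goes well beyond what the paper does, and it follows essentially the standard argument found in \cite{Prim}.

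Your outline is broadly correct, but two steps are glossed over in a way that would need care if you were actually writing a proof. First, in the injectivity argument for $g\ge2$, the ``coning'' or ``geodesic straightening'' of $\tilde h$ to the identity produces an equivariant \emph{homotopy}, not obviously an isotopy through homeomorphisms; one typically either invokes that homotopic homeomorphisms of surfaces are isotopic (Baer, Epstein), or argues more carefully via the Alexander method. Second, in the surjectivity argument for $g\ge2$, ``coning off'' a homeomorphism of $S^1_\infty$ to a $\varphi$-equivariant homeomorphism of $\overline{\mathbb H^2}$ is not literally a cone construction, since $\partial\mathbb H^2$ is an ideal boundary; the usual fix is to work in the closed-disk model, cone from an interior point to get a homeomorphism of the closed disk, and check that equivariance survives on the open part (this is where the continuous extension of deck transformations to the boundary is used). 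Both points are handled in the references you cite, so as a sketch with those citations your proposal is fine.
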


The mapping class group $\Mod(\Sigma)$ sits inside the extended mapping class group $\Mod^\pm(\Sigma)$ as an index $2$ normal subgroup. When $g\ge1$ the surface $\Sigma$ is an Eilenberg--Mac Lane space $K(\pi_1(\Sigma),1)$, and the image of $\Mod(\Sigma)$ in $\Out(\pi_1(\Sigma))$ consists of those outer automorphisms which act trivially on $H_2(\pi_1(\Sigma);\Z)\cong\Z$. When $g=0$ the mapping class group $\Mod(S^2)$ is trivial.

We now state the Dehn--Nielsen--Baer Theorem for punctured surfaces. There is a homomorphism $\phi:\Mod^\pm(\Sigma-Q)\to \Out(\pi_1(\Sigma-Q))$, defined just as before. Its image must lie in the subgroup $\Out^*(\pi_1(\Sigma-Q))$ of $\Out(\pi_1(\Sigma-Q))$ consisting of outer automorphisms which preserve the set of $n$ conjugacy classes represented by simple closed curves surrounding the individual punctures. When $\Sigma-Q$ is hyperbolic, this follows from the fact that an automorphism of $\pi_1(\Sigma-Q)$ induced by a homeomorphism must preserve parabolic elements.

\begin{thm}[Dehn--Nielsen--Baer for punctured surfaces]\label{thm:DNBpunctures}
Suppose that $\Sigma-Q$ is hyperbolic (for instance, if its Euler characteristic $\chi(\Sigma-Q)$ is negative). Then the homomorphism
\[
\phi:\Mod^{\pm}(\Sigma-Q)\to \Out^*(\pi_1(\Sigma-Q))
\]
is an isomorphism.
\end{thm}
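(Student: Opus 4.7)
The plan is to prove injectivity and surjectivity of $\phi$ separately, reducing the punctured case to the closed-surface DNB theorem (Theorem \ref{thm:DNBclosed}) via the Birman exact sequence (Theorem \ref{thm:Birman}), after identifying $\Mod^\pm(\Sigma,Q)$ with $\Mod^\pm(\Sigma-Q)$ as in the end of Section 2.

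For injectivity, suppose $h\in \Homeo(\Sigma-Q)$ induces the trivial outer automorphism of $\pi_1(\Sigma-Q)$. After conjugating by an isotopy supported along a path, we may assume $h$ fixes a basepoint and acts as the identity on $\pi_1(\Sigma-Q)$. Since $\chi(\Sigma-Q)<0$ the universal cover of $\Sigma-Q$ is $\mathbb H^2$, so $\Sigma-Q$ is a $K(\pi_1(\Sigma-Q),1)$; obstruction theory on the aspherical target then supplies a homotopy from $h$ to $\mathrm{id}_{\Sigma-Q}$. A version of Baer's theorem valid for surfaces of negative Euler characteristic (see \cite[\S1.4]{Prim}) upgrades this homotopy to an isotopy, so $h$ represents the trivial element of $\Mod^\pm(\Sigma-Q)$.

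For surjectivity, take $\Phi\in\Out^*(\pi_1(\Sigma-Q))$. The inclusion $\Sigma-Q\hookrightarrow\Sigma$ yields a short exact sequence
\[
1\to N \to \pi_1(\Sigma-Q) \to \pi_1(\Sigma)\to 1,
\]
where $N$ is the normal closure of the peripheral conjugacy classes. By hypothesis, $\Phi$ preserves $N$ and so descends to an outer automorphism $\bar\Phi\in\Out(\pi_1(\Sigma))$, which by Theorem \ref{thm:DNBclosed} is realized by some $\bar h\in\Mod^\pm(\Sigma)$. Lift $\bar h$ through the extended Birman surjection $\Mod^\pm(\Sigma-Q)\twoheadrightarrow\Mod^\pm(\Sigma)$, arranging the lift so that its induced permutation of $Q$ matches the one prescribed by $\Phi$ on peripheral classes (this is possible because $B_n(\Sigma)\to\mathfrak S_n$ is surjective). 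The resulting $h\in\Mod^\pm(\Sigma-Q)$ satisfies $\phi(h)$ and $\Phi$ agree after reduction to $\Out(\pi_1(\Sigma))$, so their ratio lies in the kernel of the map $\Out^*(\pi_1(\Sigma-Q))\to\Out(\pi_1(\Sigma))$ induced by $\pi_1(\Sigma-Q)\twoheadrightarrow\pi_1(\Sigma)$.

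The main obstacle is to identify this kernel with the image under $\phi$ of the point-pushing map $\partial\colon B_n(\Sigma)\to\Mod^\pm(\Sigma-Q)$, modulo the centre $\mathcal Z B_n(\Sigma)$ in the low-genus cases of Theorem \ref{thm:Birman}. This is essentially a group-cohomological matching: kernel elements correspond to derivations $\pi_1(\Sigma)\to N^{\mathrm{ab}}$ modulo inner derivations, and the point-pushing subgroup admits the parallel description coming from the configuration-space fibration $F_n(\Sigma)\to\Sigma$. Once this identification is in hand, we correct $h$ by a point-pushing homeomorphism realizing the residual outer automorphism, yielding a homeomorphism whose induced outer automorphism is exactly $\Phi$ and completing the proof. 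The low-genus cases, where $\mathcal ZB_n(\Sigma)$ is non-trivial, require extra care: the correction is only well-defined modulo the centre, but this ambiguity collapses when passing to $\Out^*$.
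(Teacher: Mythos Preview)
The paper does not prove this theorem; it is stated as a classical result with references to \cite{Prim}, \cite{Ivan}, and \cite{ZVC}. So there is no proof in the paper to compare against, and I will simply assess your argument on its own merits.

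Your injectivity argument is the standard one and is essentially correct.

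Your surjectivity argument, however, has a genuine gap. You reduce to showing that the kernel of $J:\Out^*(\pi_1(\Sigma-Q))\to\Out(\pi_1(\Sigma))$ coincides with the image under $\phi$ of the point-pushing subgroup, and you yourself flag this as ``the main obstacle.'' Your proposed identification via derivations is not correct as stated: the kernel of $\Out(G)\to\Out(G/N)$ is not in general given by $H^1(G/N,N^{\mathrm{ab}})$, and here $N$ is an infinitely generated non-abelian free group, so passing to $N^{\mathrm{ab}}$ loses essential information. More fundamentally, the assertion that every element of $\ker J$ is realised by a point-pushing homeomorphism is, in light of the Birman sequence and closed DNB, equivalent to the surjectivity of $\phi$ you are trying to prove. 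The argument is therefore circular.

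The approach also breaks down entirely when $g=0$. Then $\pi_1(\Sigma)$ is trivial, $J$ is the trivial map, and Theorem~\ref{thm:DNBclosed} gives no information: every element of $\Out^*(\pi_1(S^2-Q))$ lies in $\ker J$, so you are left needing to realise \emph{all} of $\Out^*$ by point-pushing, which is the full statement. The standard proofs (as in \cite{Prim} or \cite{ZVC}) do not bootstrap from the closed case; they construct the realising homeomorphism directly from the combinatorics of simple closed curves or from hyperbolic geometry, and this is the substantive content you are missing.
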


Again, $\Mod(\Sigma-Q)$ sits inside $\Mod^\pm(\Sigma-Q)$ as an index $2$ subgroup. When $n\ge1$ its image in $\Out^*(\pi_1(\Sigma-Q))$ consists of outer automorphism which preserve the set of conjugacy classes of \emph{oriented} simple closed curves around the punctures, where the orientations are induced by a fixed orientation of $\Sigma$.

Finally in this section we prove a naturality result relating Theorems \ref{thm:DNBclosed} and \ref{thm:DNBpunctures}. We begin by defining algebraically a homomorphism $J:\Out^*(\pi_1(\Sigma-Q))\to \Out(\pi_1(\Sigma))$. The group $\pi_1(\Sigma-Q)$ may be identified with the group $G$ with presentation
\[
G=\langle a_1,b_1,\ldots, a_g,b_g,d_1,\ldots , d_n \mid [a_1,b_1]\cdots [a_g,b_g]=d_1\cdots d_n\rangle,
\]
where the $d_i$ are classes of loops which go once around the individual punctures $z_i\in Q$. The homomorphism $\pi_1(\Sigma-Q)\to \pi_1(\Sigma)$ induced by inclusion is an epimorphism with kernel the normal closure $N:=\overline{\langle d_1,\ldots , d_n\rangle}$ of the subgroup generated by the $d_i$. Let $f\in \Aut(\pi_1(\Sigma-Q))$ be an automorphism representing $[f]\in \Out^*(\pi_1(\Sigma-Q))$. Then due to the condition that $[f]$ permutes the conjugacy classes of the $d_i$ and their inverses, we see that $f$ must preserve $N$. Therefore $f$ induces an automorphism $\bar{f}$ of $G/N\cong \pi_1(\Sigma)$. Conjugating $f$ by an element $g\in G$ has the effect of conjugating $\bar{f}$ by the image $\bar{g}\in G/N$. Hence the assignment $[f]\mapsto [\bar{f}]$ is well-defined, and is easily seen to be a homomorphism.

\begin{lem}\label{lem:DNBnat}
The following diagram commutes, for all $g\ge0$ and $n\ge1$:
\[
\xymatrix{
\Mod(\Sigma,Q) \ar[d]^\cong \ar[r]^-{j_*} & \Mod(\Sigma) \ar@{^{(}->}[dd]^\phi \\
\Mod(\Sigma-Q) \ar@{^{(}->}[d]^\phi & & \\
\Out^*(\pi_1(\Sigma-Q)) \ar[r]^-{J} & \Out(\pi_1(\Sigma)).
}
\]
\end{lem}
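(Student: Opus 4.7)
The plan is to chase a single homeomorphism representative around the diagram. Let $[h] \in \Mod(\Sigma, Q)$ be represented by an orientation-preserving self-homeomorphism $h \colon \Sigma \to \Sigma$ with $h(Q) = Q$. The left vertical isomorphism is induced by restriction $h \mapsto h|_{\Sigma - Q}$, with inverse given by Freudenthal compactification as discussed at the end of Section~2, while $j_*[h]$ is the class of $h$ regarded as a self-homeomorphism of $\Sigma$. So I need to show that $J$ applied to the outer class of $(h|_{\Sigma-Q})_*$ equals the outer class of $h_*$ in $\Out(\pi_1(\Sigma))$.

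The key step is to arrange basepoints so that a single path controls both copies of $\phi$. Fix $x_0 \in \Sigma - Q$. Since $h$ need not fix $x_0$, to obtain $h_*$ as an automorphism of $\pi_1(\Sigma, x_0)$ one must pick a path $\gamma$ from $h(x_0)$ to $x_0$ and compose the induced isomorphism $\pi_1(\Sigma, x_0) \to \pi_1(\Sigma, h(x_0))$ with the change-of-basepoint map associated to $\gamma$. Because $\Sigma - Q$ is path-connected, I may choose such a $\gamma$ to lie entirely in $\Sigma - Q$; the same $\gamma$ then defines $(h|_{\Sigma - Q})_*$ as an automorphism of $\pi_1(\Sigma - Q, x_0)$. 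Both definitions are independent of these choices up to inner automorphism, which is all that matters in $\Out$.

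Writing $i \colon \Sigma - Q \hookrightarrow \Sigma$ for the inclusion, the strict equality $i \circ h|_{\Sigma - Q} = h \circ i$ at the level of spaces, together with the fact that $i$ sends $\gamma$ to itself, yields
\[
i_* \circ (h|_{\Sigma - Q})_* = h_* \circ i_* \colon \pi_1(\Sigma - Q, x_0) \to \pi_1(\Sigma, x_0).
\]
Now $i_*$ is surjective with kernel $N = \overline{\langle d_1, \ldots, d_n \rangle}$, so this identity says precisely that $(h|_{\Sigma - Q})_*$ descends through $i_*$ to the automorphism $h_*$ of $\pi_1(\Sigma) \cong \pi_1(\Sigma - Q)/N$. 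Passing to $\Out(\pi_1(\Sigma))$, this descent is by definition the effect of $J$, so $J\bigl(\phi[h|_{\Sigma - Q}]\bigr) = [h_*] = \phi(j_*[h])$.

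I do not anticipate a serious obstacle; the only point requiring care is the basepoint bookkeeping in the two instances of $\phi$, and this is neutralised by choosing the connecting path $\gamma$ to avoid $Q$, so that it represents both change-of-basepoint data simultaneously and is preserved by $i$.
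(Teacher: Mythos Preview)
Your proof is correct and follows essentially the same approach as the paper's: both reduce the claim to the naturality square $i_*\circ (h|_{\Sigma-Q})_* = h_*\circ i_*$ for the inclusion $i:\Sigma-Q\hookrightarrow\Sigma$, from which one reads off that $h_*$ is the automorphism of $\pi_1(\Sigma)\cong\pi_1(\Sigma-Q)/N$ induced by $(h|_{\Sigma-Q})_*$. The only difference is cosmetic: the paper records the square with basepoints $x$ and $h(x)$ and leaves the change-of-basepoint implicit, whereas you make the choice of $\gamma\subset\Sigma-Q$ explicit so that both automorphisms live at $x_0$ from the outset.
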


\begin{proof}
Let $[h]\in \Mod(\Sigma,Q)$ be a mapping class represented by a homeomorphism of pairs $h:(\Sigma,Q)\to (\Sigma,Q)$. Letting $h|:\Sigma-Q\to\Sigma-Q$ be the restriction, we see that commutativity of the diagram amounts to the statement that $J[h|_*]=[h_*]\in\Out(\pi_1(\Sigma))$. This follows from commutativity of the diagram
\[
\xymatrix{
\pi_1(\Sigma-Q;x) \ar[d] \ar[r]^-{h|_*} & \pi_1(\Sigma-Q;h(x)) \ar[d] \\
\pi_1(\Sigma;x) \ar[r]^-{h_*} & \pi_1(\Sigma;h(x))
}
\]
which shows that $h_*$ is the automorphism induced by $h|_*$.
\end{proof}

\section{Proofs in higher genus}

Let $\Sigma$ be a closed orientable surface, and let $I=[0,1]$ denote the closed unit interval. An element of the surface braid group $B_n(\Sigma)=\pi_1(C_n(\Sigma);Q)$ may be represented by a geometric braid $\beta$ in $\Sigma\times I$, that is, an isotopy class of $n$ non-intersecting arcs in $\Sigma\times I$ connecting the points $Q\times\{0\}$ to the points $Q\times\{1\}$, and such that the projection of each arc to $I$ is a homeomorphism. On identifying the two ends $\Sigma\times\{0\}$ and $\Sigma\times\{1\}$ via $(x,0)\sim(x,1)$, we obtain a link $\hat\beta$ in the $3$-manifold $V:=\Sigma\times S^1$, called the \emph{closure} of $\beta$. Note that the closure of an $n$-strand surface braid intersects $\Sigma\times\{\theta\}\subset V$ transversely in $n$ points for every $\theta\in S^1$, and hence not every link in $V$ is isotopic to the closure of a surface braid.

Let $\overline{V}=\Sigma\times\R$. Multiplying $\Sigma$ by the infinite cyclic covering $\R\to \R/\Z=S^1$, we obtain a covering $p:\overline{V}\to V$ given by $p(x,t)=(x, [t])$. Denote by $T:\overline{V}\to\overline{V}$ the deck transformation $T(x,t)=(x,t-1)$.

Given a braid $\beta\in B_n(\Sigma)$, we denote its closure by $L\subset V$, and let $\overline{L}=p^{-1}(L)\subset \overline{V}$ denote the pre-image of the closure in the covering. Thus $\overline{L}$ consists of infinitely many copies of the braid $\beta$ stacked on top of one another. Such infinite braids can always be unravelled by pushing the braiding off to infinity. More precisely, let $\overline{\beta}_t:\Sigma\to \Sigma$ be an isotopy associated to the braid $\beta$, as in the description of the point-pushing map following Theorem \ref{thm:Birman}. Then the map
\[
H_\beta:\overline{V}\to \overline{V},\qquad (x,t)\mapsto (\overline{\beta}_{t-\lfloor t\rfloor}\overline{\beta}_1^{\lfloor t\rfloor}(x),t)
\]
is a level-preserving homeomorphism which throws $Q\times\R$ onto $\overline{L}$, and therefore induces a level-preserving homeomorphism $H_\beta: (\Sigma-Q)\times\R\to \overline{V}-\overline{L}$ (here and elsewhere below we use the same symbol to denote two homeomorphisms, one of which is the restriction of the other).

Let $i:\Sigma \hookrightarrow \overline{V}$ and its restriction $i:\Sigma-Q\hookrightarrow \overline{V}-\overline{L}$ be the inclusions given by $x\mapsto (x,0)$. Both are homotopy equivalences, with explicit homotopy inverses given by $r={\rm pr}_\Sigma\circ H_\beta^{-1}:\overline{V}\to \Sigma$ and its restriction
\[
\xymatrix{
r: \overline{V}-\overline{L} \ar[rr]^{H_\beta^{-1}} && (\Sigma-Q)\times\R \ar[r]^-{{\rm pr}_{\Sigma-Q}} & \Sigma-Q.
}
\]
Moreover, we have the following commutative diagram
\begin{equation}\label{eq:b}
\xymatrix{
\Sigma-Q \ar[r]^i \ar[d]_{\overline{\beta}_1} & \overline{V}-\overline{L} \ar[d]^T\\
\Sigma-Q & \overline{V}-\overline{L} \ar[l]^r
}
\end{equation}
relating the punctured surface homeomorphism associated to $\beta$ with the deck transformation $T$.

Similarly, given a second braid $\beta'$ with closure $L'\subset V$ and $\overline{L'}=p^{-1}(L')\subset \overline{V}$, we get a homotopy inverse $r':\overline{V}-\overline{L'}\to \Sigma-Q$ to the inclusion $i':\Sigma-Q\hookrightarrow\overline{V}-\overline{L'}$ and a commuting diagram
\begin{equation}\label{eq:b'}
\xymatrix{
\Sigma-Q \ar[r]^{i'} \ar[d]_{\overline{\beta'}_1} & \overline{V}-\overline{L'} \ar[d]^T\\
\Sigma-Q & \overline{V}-\overline{L'} \ar[l]^{r'}.
}
\end{equation}

We are now in a position to give proofs of Theorems \ref{thm:genus2isotopy} and \ref{thm:genus2homeomorphism} stated in the Introduction.

\begin{proof}[Proof of Theorem \ref{thm:genus2isotopy}]

It is clear that conjugate braids close to isotopic links.

Conversely, suppose that $\beta,\beta'\in B_n(\Sigma)$ have isotopic closures $L,L'\subset V$. Thus there is an isotopy $h_t:V\to V$ such that $h_0={\rm id}$ and $h_1(L) = L'$. An exercise in covering space theory shows that $h_t$ lifts to an isotopy $\overline{h}_t:\overline{V}\to \overline{V}$ with $\overline{h}_0={\rm id}$ and $\overline{h}_1(\overline{L})=\overline{L'}$ which commutes with the deck transformations, in that $\overline{h}_t \circ T = T \circ \overline{h}_t$.

Using $\overline{h}_1$ we may define a map $\Phi_h:\Sigma\to \Sigma$ and its restriction $\Phi_h:\Sigma-Q\to\Sigma-Q$ by commutativity of the following diagrams:

\begin{equation}\label{eq:h}
\xymatrix{
\Sigma \ar[r]^-{i} \ar[d]_{\Phi_h} & \overline{V} \ar[d]^{\overline{h}_1} & & \Sigma-Q \ar[r]^-{i} \ar[d]_{\Phi_h} & \overline{V}-\overline{L} \ar[d]^{\overline{h}_1} \\
\Sigma & \overline{V}\ar[l]^-{r'} & & \Sigma-Q & \overline{V}-\overline{L'} \ar[l]^-{r'}.
}
\end{equation}

Note that both $\Phi_h$ and its restriction are homotopy equivalences (with homotopy inverse $r\circ \overline{h}_1^{-1}\circ i'$), and that furthermore $\Phi_h:\Sigma\to \Sigma$ is homotopic to the identity.

Next, by Theorem \ref{thm:Birman} and Lemma \ref{lem:DNBnat} we have a commutative diagram whose top row is exact:
\begin{equation}\label{eq:strategy}
\xymatrix{
1 \ar[r] & B_n(\Sigma) \ar@{^{(}->}[ddr]_{\iota} \ar[r]^-{\partial} & \Mod(\Sigma,Q) \ar[d]^\cong \ar[r]^-{j_*} & \Mod(\Sigma) \ar@{^{(}->}[dd]^\phi \ar[r] & 1 \\
&& \Mod(\Sigma-Q) \ar@{^{(}->}[d]^\phi & & \\
 && \Out^*(\pi_1(\Sigma-Q)) \ar[r]^-{J} & \Out(\pi_1(\Sigma)).
}
\end{equation}
The embedding $\iota :B_n(\Sigma)\hookrightarrow \Out^*(\pi_1(\Sigma-Q))$ satisfies $\iota(\beta) =[(\overline{\beta}_1)_*]$ and $\iota(\beta') =[(\overline{\beta'}_1)_*]$. From diagrams (\ref{eq:b}) and (\ref{eq:b'}) and the equivariance of $\overline{h}_1$ we obtain the larger commutative diagram
\begin{equation}\label{eq:conj}
\xymatrix{
\Sigma-Q \ar[r]^i \ar[d]_{\overline{\beta}_1} & \overline{V}-\overline{L} \ar[d]^T \ar[r]^{\overline{h}_1} & \overline{V}-\overline{L'} \ar[r]^{r'} \ar[d]^{T} & \Sigma-Q \ar[d]^{\overline{\beta'}_1}\\
\Sigma-Q & \overline{V}-\overline{L} \ar[l]^r &  \overline{V}-\overline{L'} \ar[l]^{\overline{h}_1^{-1}} & \Sigma-Q \ar[l]^{i'}
}
\end{equation}
whose top row is $\Phi_h$ and whose bottom row (read right to left) is its homotopy inverse. This shows that $[(\Phi_h)_*]$ conjugates $\iota(\beta)$ to $\iota(\beta')$ in the group $\Out^*(\pi_1(\Sigma-Q))$. It only remains to note that $[(\Phi_h)_*]=\iota(\alpha)$ for some braid $\alpha\in B_n(\Sigma)$; this follows from the Dehn--Nielsen--Baer theorem, together with Lemma \ref{lem:DNBnat} and the fact that $\Phi_h:\Sigma\to \Sigma$ induces the identity outermorphism.
\end{proof}

\begin{proof}[Proof of Theorem \ref{thm:genus2homeomorphism}]
Suppose that $\beta,\beta'\in B_n(\Sigma)$ are in the same orbit under the outer action of $\Mod(\Sigma)$ induced by the extension
\[
\xymatrix{
1 \ar[r] & B_n(\Sigma) \ar[r]^-{\partial} & \Mod(\Sigma,Q) \ar[r] & \Mod(\Sigma) \ar[r] & 1.
}
\]
This amounts to saying that $\partial(\beta)$ and $\partial(\beta')$ are conjugate in $\Mod(\Sigma,Q)$. Hence there is a homeomorphism $h:(\Sigma,Q)\to (\Sigma,Q)$ such that $h\overline{\beta}_1 h^{-1} = \overline{\beta'}_1$. We then define a homeomorphism
\[
H:V\to V,\qquad [x,t]\mapsto [\overline{\beta'}_t\circ h\circ \overline{\beta}_t^{-1}(x),t],
\]
which by direct verification has $H(\hat\beta)=\hat{\beta'}$ (note that it may permute the components, according as $h$ permutes the points of $Q$).

Conversely, suppose that $\beta,\beta'\in B_n(\Sigma)$ have homeomorphic closures $L,L'\subset V$. Let $h:V \to V$ be an orientation preserving homeomorphism such that $h(L)=L'$. Exactly as in the proof of Theorem \ref{thm:genus2isotopy}, one may use $h$ to obtain homotopy equivalences $\Phi_h:\Sigma\to \Sigma$ and $\Phi_h:\Sigma-Q\to\Sigma-Q$, such that $[(\Phi_h)_*]$ conjugates $\iota(\beta)$ into $\iota(\beta')$ in the group $\Out^*(\pi_1(\Sigma-Q))$. The only difference is that now $\Phi_h:\Sigma\to \Sigma$ need not be homotopic to the identity, and so we may only conclude that $\partial(\beta)$ and $\partial(\beta')$ are conjugate in $\Mod(\Sigma,Q)$. The result follows.
\end{proof}

\section{Proofs in low genus}

When the genus is $0$ or $1$, the proofs proceed similarly, except that the diagram (\ref{eq:strategy}) is replaced by the diagram
\begin{equation}\label{eq:strategy2}
\xymatrix{
1 \ar[r] & \mathcal{Z}B_n(\Sigma) \ar[r] &  B_n(\Sigma) \ar@{^{(}->}[ddr]_{\iota} \ar[r]^-{\partial} & \Mod(\Sigma,Q) \ar[d]^\cong \ar[r]^-{j_*} & \Mod(\Sigma) \ar@{^{(}->}[dd]^\phi \ar[r] & 1 \\
&&& \Mod(\Sigma-Q) \ar@{^{(}->}[d]^\phi & & \\
 &&& \Out^*(\pi_1(\Sigma-Q)) \ar[r]^-{J} & \Out(\pi_1(\Sigma)).
}
\end{equation}

\begin{proof}[Proof of Theorem \ref{thm:genus0}]
When $n\ge3$ the center $\mathcal{Z}B_n(S^2)$ is of order $2$, generated by the full twist $\Delta^2\in B_n(S^2)$ (see \cite{GilletteVanB}). The mapping class group $\Mod(S^2)$ is trivial.
\begin{enumerate}[(a)]
\item Suppose $\beta,\beta'\in B_n(S^2)$ close to homeomorphic links in $S^2\times S^1$. Repeating the argument used to prove Theorem \ref{thm:genus2homeomorphism}, we see that $\partial(\beta)$ and $\partial(\beta')$ are conjugate in $\Mod(S^2,Q)$. Since $\Mod(S^2)$ is trivial, there is a short exact sequence
    \[
    \xymatrix{
    1 \ar[r] &  \mathcal{Z}B_n(S^2) \ar[r] &  B_n(S^2) \ar[r]^-{\partial} & \Mod(S^2,Q) \ar[r] & 1
    }
    \]
    from which it follows that $\beta$ is conjugate to $\beta'$ or to $\Delta^2\beta'$.

    To prove the converse, it suffices to note that conjugate braids close to homeomorphic links, and that the closure $\widehat{\Delta^2}$ of the central element $\Delta^2$ is homeomorphic to the closure of the trivial braid, from which it follows that $\widehat{\beta\Delta^2}$ is homeomorphic to $\hat\beta$ for any braid $\beta$.
\item Suppose $\beta,\beta'\in B_n(S^2)$ close to isotopic links in $S^2\times S^1$. Again, an argument identical to the one used in Theorem \ref{thm:genus2isotopy} shows that $\partial(\beta)$ and $\partial(\beta')$ are conjugate in $\Mod(S^2,Q)$, from which it follows that $\beta$ and $\beta'$ are conjugate in $B_n(S^2)$, up to multiplication by $\Delta^2$.
    
    Alternatively, this follows directly from (a), since isotopic links are homeomorphic.
\end{enumerate}
\end{proof}

\begin{proof}[Proof of Theorem \ref{thm:genus1}]
When $n\ge2$ the center $\mathcal{Z} B_n(T^2)$ is free abelian of rank $2$, generated by full twists around each circle factor of $T^2=S^1\times S^1$ (see \cite{Bir,ParisRolfsen}).
\begin{enumerate}[(a)]
\item Suppose $\beta,\beta'\in B_n(T^2)$ close to homeomorphic links in $T^2\times S^1$. Repeating the previous arguments gives that $\partial(\beta)$ and $\partial(\beta')$ are conjugate in $\Mod(T^2,Q)$. Thus from the exact sequence
    \[
 \xymatrix{
1 \ar[r] & B_n(T^2)/\mathcal{Z}B_n(T^2) \ar[r]^-\partial & \Mod(T^2,Q) \ar[r] & \Mod(T^2) \ar[r] & 1,
}
\]
we see that the classes $[\beta],[\beta']\in B_n(T^2)/\mathcal{Z}B_n(T^2)$ are in the same orbit under the outer action of $\Mod(T^2)$, as claimed.

Conversely, if $[\beta],[\beta']\in B_n(T^2)/\mathcal{Z}B_n(T^2)$ are in the same orbit under the outer action of $\Mod(T^2)$, then $\partial(\beta)$ and $\partial(\beta')$ are conjugate in $\Mod(T^2,Q)$. An argument as in the proof of Theorem \ref{thm:genus2homeomorphism} then gives that the closures $\hat\beta$ and $\hat{\beta'}$ are homeomorphic links in $T^2\times S^1$.
\item Suppose $\beta,\beta'\in B_n(T^2)$ close to isotopic links in $T^2\times S^1$. One finds as in the proof of Theorem \ref{thm:genus0}(b) that they are conjugate in $B_n(T^2)$ up to multiplication by elements of $\mathcal{Z}B_n(T^2)$. However, multiplying a braid by a non-trivial element of $\mathcal{Z}B_n(T^2)$ will change the homology class of the resulting link in $T^2\times S^1$, resulting in a non-isotopic link. We therefore may conclude in this case that $\beta$ and $\beta'$ are conjugate in $B_n(T^2)$.
\end{enumerate}
\end{proof}


\begin{thebibliography}{99}
\bibitem{Alexander23} Alexander J. W., \emph{A Lemma on Systems of Knotted Curves}, Proc.\ Natl.\ Acad.\ Sci.\ USA. \textbf{9}(3) (1923), 93--95.
\bibitem{An} An, B. H., \emph{Automorphisms of braid groups on orientable surfaces}, J.\ Knot Theory Ramifications {\bf 25} (2016), no. 5, 1650022, 32 pp.
\bibitem{Artin} Artin, E., \emph{Theorie der Z\" opfe}, Abh.\ Math.\ Sem.\ Univ.\ Hamburg {\bf 4} (1925), no. 1, 47--72.
\bibitem{Bellingeri} Bellingeri, P., \emph{On automorphisms of surface braid groups},
J.\ Knot Theory Ramifications {\bf 17} (2008), no. 1, 1--11.
\bibitem{Bir} Birman J. S., \emph{Braids, Links and Mapping Class Groups}, Princeton University Press, Princeton, New Jersey, 1974.
\bibitem{BZ} Burde G., Zieschang H., \emph{Knots}, De Gruyter Studies in Mathematics, Berlin, 1985.
\bibitem{Prim} Farb B., Margalit D., \emph{A Primer on Mapping Class Groups}, Princeton University Press, Princeton, 2012.
\bibitem{GilletteVanB} Gillette, R., Van Buskirk, J., \emph{The word problem and consequences for the braid groups and mapping class groups of the $2$-sphere}, Trans.\ Amer.\ Math.\ Soc.\ {\bf 131} (1968), 277--296.
\bibitem{Gluck} Gluck H., \emph{The embedding of two-spheres in the four-sphere}, Trans. Amer. Math. Soc. {\bf 104} (1962), 308–333.
\bibitem{Ivan} Ivanov N. V., \emph{Mapping Class Groups}, Handbook of geometric topology pp.523--633, North-Holland, Amsterdam, 2002.
\bibitem{KT} Kassel C., Turaev V., \emph{Braid Groups}, Graduate Texts in Mathematics {\bf 247}, Springer, New York, 2008.
\bibitem{KidaYamagata} Kida, Y., Yamagata, S., \emph{The co-Hopfian property of surface braid groups}, J.\ Knot Theory Ramifications {\bf 22} (2013), no. 10, 1350055, 46 pp.
\bibitem{Markov} Markov A., \emph{\"{U}ber die freie \"{A}quivalenz der geschlossenen Z\"{o}pfe}, Recueil
Math\'{e}matique Moscou, Mat. Sb. {\bf 1} (43) (1936), 73–78.
\bibitem{Morton} Morton, H. R., \emph{Infinitely many fibred knots having the same Alexander polynomial},
Topology {\bf 17} (1978), no. 1, 101--104.
\bibitem{Morton2} Morton, H. R., \emph{Threading knot diagrams}, Math.\ Proc.\ Cambridge Philos.\ Soc.\ {\bf 99} (1986),
247--260.
\bibitem{ParisRolfsen} Paris, L., Rolfsen, D., \emph{Geometric subgroups of surface braid groups}, Ann.\ Inst.\ Fourier (Grenoble) {\bf 49} (1999), no. 2, 417--472. 
\bibitem{Skora} Skora, R. K., \emph{Closed braids in $3$-manifolds}, Math.\ Z.\ {\bf 211} (1992), no. 2, 173-–187.
\bibitem{Sundheim} Sundheim, P. A., \emph{The Alexander and Markov Theorems Via Diagrams for Links in $3$-Manifolds}, Trans.\ Amer.\ Math.\ Soc., {\bf 337}, no. 2, (1993), 591--607.
\bibitem{Zhang} Zhang, P., \emph{Automorphisms of braid groups on $S^2$, $T^2$, $P^2$ and the Klein bottle $K$}, J.\ Knot Theory Ramifications {\bf 17} (2008), no. 1, 47--53. 
\bibitem{ZVC} Zieschang H., Vogt E., Coldewey H-D., \emph{Surfaces and planar discontinuous groups}
(Translated from the German by John Stillwell), Lecture Notes in Mathematics {\bf 835}, Springer, Berlin, 1980.
\end{thebibliography}
\end{document}